\newtheorem{theorem}{Theorem}[section]
\newtheorem{lemma}[theorem]{Lemma}
\newtheorem{proposition}[theorem]{Proposition}
\theoremstyle{definition}
\newtheorem{example}[theorem]{Example}
\newtheorem{problem}[theorem]{Probelm}
\theoremstyle{remark}
\newtheorem{remark}[theorem]{Remark}
\numberwithin{equation}{section}
\begin{document}
\title{A Mazur--Ulam theorem in non-Archimedean normed spaces}

\author[M.S. Moslehian, Gh. Sadeghi]{Mohammad Sal Moslehian and Ghadir Sadeghi}
\address{Department of Mathematics, Ferdowsi University, P. O. Box
1159, Mashhad 91775, Iran; \newline Centre of Excellence in Analysis
on Algebraic Structures (CEAAS), Ferdowsi University, Iran.}
\email{moslehian@ferdowsi.um.ac.ir and moslehian@ams.org}
\address{Department of Mathematics, Ferdowsi
University, P. O. Box 1159, Mashhad 91775, Iran;
\newline Banach Mathematical Research Group (BMRG), Mashhad, Iran.}
\email{ghadir54@yahoo.com} \subjclass[2000]{Primary 46S10; Secondary
47S10, 26E30, 12J25.}

\keywords{isometry, Mazur--Ulam theorem, $p$-adic numbers,
non-Archimedean field, non-Archimedean normed space, spherically
complete.}

\begin{abstract}
The classical Mazur--Ulam theorem which states that every surjective
isometry between real normed spaces is affine is not valid for
non-Archimedean normed spaces. In this paper, we establish a
Mazur--Ulam theorem in the non-Archimedean strictly convex normed
spaces.
\end{abstract} \maketitle

\section{Introduction and preliminaries}

A \emph{non-Archimedean field} is a field $\mathcal{K}$ equipped
with a function (valuation) $|\cdot|$ from $\mathcal{K}$ into $[0,
\infty)$ such that $|r|=0$ if and only if $r=0$, $|rs|=|r|\,|s|$,
and $|r+s|\leq \max\{|r|, |s|\}$ for all $r, s \in \mathcal{K}$.
Clearly $|1|=|-1|=1$ and $|n| \leq 1$ for all $n \in {\mathbb N}$.
An example of a non-Archimedean valuation is the mapping $|\cdot|$
taking everything but $0$ into $1$ and $|0|=0$. This valuation is
called trivial.

\noindent In 1897, Hensel \cite{HEN} discovered the $p$-adic numbers
as a number theoretical analogue of power series in complex
analysis. Fix a prime number $p$. For any nonzero rational number
$x$, there exists a unique integer $n_x \in {\mathbb Z}$ such that
$x= \frac{a}{b} p^{n_x}$, where $a$ and $b$ are integers not
divisible by $p$. Then $|x|_p : = p^{-n_x}$ defines a
non-Archimedean norm on ${\mathbb Q}$. The completion of ${\mathbb
Q}$ with respect to the metric $d(x, y) = |x - y|_p$ is denoted by
${\mathbb Q}_p$ which is called the \emph{$p$-adic number field};
see \cite{VAN}. During the last three decades $p$-adic numbers have
gained the interest of physicists for their research in particular
in problems coming from quantum physics, $p$-adic strings and
superstrings (cf. \cite{KHR}).

Let ${\mathcal X}$ be a vector space over a scalar field
$\mathcal{K}$ with a non-Archimedean valuation $|\cdot|$. A function
$\| \cdot \| : {\mathcal X} \to [0, \infty)$ is said to be a
\emph{non-Archimedean norm} if it satisfies the following
conditions:

(i) $\|x\| = 0$ if and only if $x=0$;

(ii) $\|rx\| = |r| \|x\| \quad (r \in {\mathcal K}, x \in {\mathcal
X})$;

(iii) the strong triangle inequality
$$\| x+ y\| \leq \max \{\|x\|, \|y\|\} \quad (x, y \in {\mathcal X}).$$
Then $({\mathcal X}, \|\cdot\|)$ is called a non-Archimedean normed
space. A non-Archimedean normed space over a valued filed
$\mathcal{K}$ with $|2|=1$ satisfying $\|\mathcal{X}\|:=\{\|x\| : x
\in \mathcal{X}\}=\{|r| : r \in \mathcal{K}\}$ is called
\emph{strictly convex} if $\|x+y\|=\max\{\|x\|, \|y\|\}$ and
$\|x\|=\|y\|$ imply $x=y$. The assumption $|2|=1$ is necessary in
order a space ${\mathcal X}$ to be satisfies the implication. A
non-Archimedean normed space is called \emph{spherically complete}
if every collection of closed balls in ${\mathcal X}$ which is
totally ordered by inclusion has a non-empty intersection
\cite{VAN}. Every spherically complete space is complete but the
converse is not true in general. The notion of spherical
completeness is more suitable than the notion of completeness in the
study of non-Archimedean spaces. Theory of non-Archimedean normed
spaces is not trivial, for instance there may not be any unit
vector. Although many results in classical normed space theory have
a non-Archimedean counterpart, but their proofs are essentially
different and require an entirely new kind of intuition, cf.
\cite{M-R, N-B}.

A valued space is a non-Archimedean normed linear space over a
trivially field with characteristic $0$. A $V$-space $\mathcal{X}$
is a valued space which is complete in its norm topology such that
$$\|\mathcal{X}\|\subset \{0\}\cup
\{\rho^{n} :n \in \mathbb{Z}\}$$ for some real number $\rho >1$
\cite{ROB}.

The theory of isometric mappings had its beginning in the classical
paper \cite{M-U} by S. Mazur and S. Ulam, who proved that every
isometry of a real normed  vector space onto another real normed
vector space is a linear mapping up to translation. The property is
not true for normed complex vector spaces (for instance, consider
the conjugation on $\mathbb{C}$). The hypothesis surjectivity is
essential. Without this assumption, J. A. Baker \cite{BAK} proved
that every isometry from a normed real space into a strictly convex
normed real space is a linear up to translation. A number of
mathematicians have had deal with Mazur--Ulam theorem; see
\cite{R-S, R-X} and references therein. Mazur--Ulam Theorem is not
valid in the contents of non-Archimedean normed spaces, in general.
As a counterexample take $\mathbb{R}$ with the trivial
non-Archimedean valuation and define $f: \mathbb{R} \to \mathbb{R}$
by $f(x)=x^3$. Then $f$ is clearly a surjective isometry and
$f(0)=0$, but $f$ is not linear.

In this paper, by using some ideas of \cite{BAK}, we establish a
Mazur--Ulam type theorem in the framework of strictly convex
non-Archimedean normed spaces. We also provide an example to show
that the assumption of strict convexity is essential.


\section{Main results}

\begin{lemma}\label{L}
Let $\mathcal{X}$ be a non-Archimedean normed space over a valued
filed $\mathcal{K}$ which is strictly convex and let $x, y \in
\mathcal{X}$. Then $\frac{x+y}{2}$ is the unique member of
$\mathcal{X}$ which is of distance $\|x-y\|$ from both $x$ and $y$.
\end{lemma}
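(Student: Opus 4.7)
The proof splits into existence and uniqueness. Existence is a one-line check: using homogeneity of the norm and the standing assumption $|2|=1$ (built into the definition of strict convexity used here), one has $\|\tfrac{x+y}{2}-x\|=|2|^{-1}\|y-x\|=\|x-y\|$, and symmetrically $\|\tfrac{x+y}{2}-y\|=\|x-y\|$.

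The substantive step is uniqueness, and my plan is to feed the strict convexity hypothesis a cleverly chosen pair of vectors. Given any $z\in\mathcal{X}$ with $\|z-x\|=\|z-y\|=\|x-y\|$, I would set $a:=z-x$ and $b:=y-z$, so that $a+b=y-x$. By hypothesis $\|a\|=\|b\|=\|x-y\|$, and also $\|a+b\|=\|y-x\|=\|x-y\|$. In particular $\|a+b\|=\max\{\|a\|,\|b\|\}$ while $\|a\|=\|b\|$, so the strict convexity axiom forces $a=b$, that is, $z-x=y-z$. Since $|2|=1$ implies $2$ is invertible in $\mathcal{K}$, solving yields $z=\tfrac{x+y}{2}$.

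The only delicate point is the choice of signs when forming $a$ and $b$: pairing $z-x$ with $y-z$ rather than with $z-y$ is what makes $a+b$ telescope to $y-x$, the vector whose norm matches the common value $\|a\|=\|b\|$, and thus what allows the hypothesis $\|a+b\|=\max\{\|a\|,\|b\|\}$ to drop out for free. Once this pairing is set up, the argument is purely algebraic and I anticipate no further obstacle.
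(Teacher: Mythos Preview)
Your argument is correct, and in fact cleaner than the paper's. The paper proves uniqueness by taking two candidates $z,t$ (each at distance $\|x-y\|$ from both $x$ and $y$), forming their midpoint $\tfrac{z+t}{2}$, and estimating $\|x-\tfrac{z+t}{2}\|$ and $\|y-\tfrac{z+t}{2}\|$ via the strong triangle inequality; a short contradiction argument then forces one of these estimates to be an equality, at which point strict convexity applied to the pair $\tfrac{x-z}{2},\,\tfrac{x-t}{2}$ yields $z=t$. Your route is more direct: rather than comparing two candidates, you compare a single candidate $z$ against the midpoint by applying strict convexity to $a=z-x$ and $b=y-z$. The telescoping $a+b=y-x$ makes the hypothesis $\|a+b\|=\max\{\|a\|,\|b\|\}$ immediate, with no intermediate inequality or case split needed. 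What the paper's approach buys is perhaps a slightly more symmetric picture (it never privileges the midpoint until the very end), but your choice of $a,b$ cuts straight to the conclusion and avoids the auxiliary contradiction step entirely.
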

\begin{proof}
There is nothing to prove if $x=y.$ Let $x\neq y.$ The point
$\frac{x+y}{2}$ is of distance $\|x-y\|$ from both $x$ and $y$,
since
$$\|x-\frac{x+y}{2}\|=\|\frac{x-y}{2}\|=\|x-y\|\,;$$
$$\|y-\frac{x+y}{2}\|=\|\frac{x-y}{2}\|=\|x-y\|\,.$$
Assume that $z, t \in \mathcal{X}$ with
\begin{eqnarray*}
\|x-z\|=\|x-t\|=\|y-z\|=\|y-t\|=\|x-y\|\,.
\end{eqnarray*}
Then
\begin{eqnarray}\label{equ1}
\|x-\frac{z+t}{2}\|\leq \max\{\|\frac{x-z}{2}\|,
\|\frac{x-t}{2}\|\}=\|x-y\|\,.
\end{eqnarray}
 Similarly
\begin{eqnarray}\label{equ2}
\|y-\frac{z+t}{2}\|\leq \|x-y\|
\end{eqnarray}
If both of inequalities (\ref{equ1}) and (\ref{equ2}) were strict we
would have
\begin{eqnarray*}
\|x-y\|\leq \max\{\|x-\frac{z+t}{2}\|, \|y-\frac{z+t}{2}\|\} <
\|x-y\|\,,
\end{eqnarray*}
a contradiction. So at least one of the equalities holds in
(\ref{equ1}) and (\ref{equ2}). Without lose of generality assume
that equality holds in (\ref{equ1}). Then $\|x-\frac{z+t}{2}\|=
\max\{\|\frac{x-z}{2}\|, \|\frac{x-t}{2}\|\}.$ By strictly convexity
we obtain $\frac{x-z}{2}=\frac{x-t}{2}$ that is $z=t.$
\end{proof}

\begin{theorem} \label{main} Suppose that $\mathcal{X}$ and $\mathcal{Y}$ are non-Archimedean normed spaces and
$\mathcal{Y}$ is strictly convex. If $f:\mathcal{X} \to \mathcal{Y}$
is an isometry, then $f-f(0)$ is additive.
\end{theorem}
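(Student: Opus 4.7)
The plan is to reduce to the case $g := f - f(0)$, which is still an isometry and now satisfies $g(0)=0$, and then to show that $g$ preserves midpoints and is therefore additive. Both steps ride on Lemma~\ref{L}, which will be invoked in the codomain $\mathcal{Y}$ (where strict convexity is assumed), not in the domain.

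First I would verify the midpoint property $g\!\left(\tfrac{x+y}{2}\right)=\tfrac{g(x)+g(y)}{2}$ for all $x,y\in\mathcal{X}$. The computation in the proof of Lemma~\ref{L} shows that in any non-Archimedean normed space, the midpoint $\tfrac{x+y}{2}$ sits at distance $\|x-y\|$ from both $x$ and $y$, so after applying the isometry $g$ the image $g\!\left(\tfrac{x+y}{2}\right)$ sits at distance $\|g(x)-g(y)\|$ from both $g(x)$ and $g(y)$. Now invoke Lemma~\ref{L} inside the strictly convex space $\mathcal{Y}$: the point equidistant from $g(x)$ and $g(y)$ at distance $\|g(x)-g(y)\|$ is unique and equals $\tfrac{g(x)+g(y)}{2}$, forcing the midpoint identity.

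From here additivity is routine. Setting $y=0$ and using $g(0)=0$ gives $g(x/2)=g(x)/2$; replacing $x$ by $2x$ yields $g(2x)=2g(x)$ (this is where the standing hypothesis $|2|=1$, together with characteristic $0$, guarantees that $1/2$ makes sense and the norm behaves well). Combining these,
\begin{equation*}
g(x+y)=g\!\left(2\cdot\tfrac{x+y}{2}\right)=2\,g\!\left(\tfrac{x+y}{2}\right)=g(x)+g(y),
\end{equation*}
which is the desired additivity of $f - f(0)$.

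I do not expect a serious obstacle; the only conceptually delicate point is the direction in which strict convexity is used. One must apply Lemma~\ref{L} to the \emph{image} points $g(x),g(y)\in\mathcal{Y}$ (where uniqueness of the equidistant midpoint holds) rather than to $x,y\in\mathcal{X}$, since $\mathcal{X}$ is not assumed strictly convex. Once this is observed, the rest of the argument is the standard Mazur–Ulam reduction from midpoint-preservation to additivity, transplanted into the non-Archimedean setting.
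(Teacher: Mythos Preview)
Your proposal is correct and follows essentially the same route as the paper: reduce to $g=f-f(0)$, use the isometry to transport the non-Archimedean midpoint distance relation into $\mathcal{Y}$, and invoke Lemma~\ref{L} there (where strict convexity holds) to force $g\!\left(\tfrac{x+y}{2}\right)=\tfrac{g(x)+g(y)}{2}$. You even make explicit the passage from Jensen's equation to additivity via $g(0)=0$, which the paper leaves to the reader.
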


\begin{proof}
Let $g(x)=f(x)-f(0).$ Then $g$ is an isometry and $g(0)=0.$

\begin{eqnarray*}
\|g\left(\frac{x+y}{2}\right)-g(x)\|=\|\frac{x+y}{2}-x\|=\|x-y\|=\|g(x)-g(y)\|
\quad (x, y \in \mathcal{X})
\end{eqnarray*}
and similarly
\begin{eqnarray*}
\|g(\frac{x+y}{2})-g(y)\|=\|g(x)-g(y)\| \quad (x, y \in \mathcal{X})
\end{eqnarray*}

It follows from Lemma \ref{L} that
$$g(\frac{x+y}{2})=\frac{g(x)+g(y)}{2}$$
Hence $g=f-f(0)$ is additive.
\end{proof}
\begin{remark}
If $\mathcal{K}=\mathbb{Q}_{p}$ with the valuation $|.|_p$, where
$p\neq 2$ then $f-f(0)$ in Theorem \ref{main} is a linear mapping.
\end{remark}

\begin{example}
Let $\mathcal{Y}$ be a $V$-space over a trivially valued field
$\mathcal{K}$ such that $1\in \|\mathcal{Y}\|$ i.e there exists
$y_{0}\in \mathcal{Y}$ with $\|y_{0}\|=1.$  Clearly $|2|=1$ and
$\mathcal{Y}$ is not strictly convex. Now define the mapping
$f:\mathcal{K} \to \mathcal{Y}$ by $f(\alpha)=\alpha^{3}y_{0}.$ Then
$f$ is an isometry and $f(0)=0$ but $f$ is not additive. Therefore
the assumption that $\mathcal{Y}$ is strictly convex cannot be
omitted in Theorem \ref{main}.
\end{example}

It is well know that $\mathbb{Q}_{2}$ is spherically complete (see
\cite{SCH}) and $|2|_2 \neq 1.$ Let $f:\mathbb{Q}_{2} \to
\mathbb{Q}_{2}$ be defined by
$$f(x)=\left \{ \begin{array}{ll}
1/x &  x=\frac{a}{b}2^{0} \textrm{~for ~some~} a\neq 0, b \textrm{~with~} (a, b)=1 \\
x&\textrm{otherwise}\\
\end{array}\right.$$
Then $f$ is an isometry (note that in a non-Archimedean field
$|a-b|=\max\{|a|, |b|\}$ for all $a, b$ with $|a| \neq |b|$),
$f(0)=0$ but $f$ is not an additive mapping. Regarding this fact and
using some ideas of \cite{VAI}, we have the next result which seems
to be interesting on its own right.

\begin{proposition} Suppose that $\mathcal{X}$ and
$\mathcal{Y}$ are non-Archimedean normed spaces on a non-Archimedean
field $\mathcal{K}$ with $|k|\neq 1$ for some $k \in {\mathbb N}$.
Assume that $\mathcal{X}$ or $\mathcal{Y}$ is spherically complete.
If $f:\mathcal{X} \to \mathcal{Y}$ is a surjective isometry, then
for each $u \in \mathcal{X}$ there exists a unique $v \in
\mathcal{X}$ such that $f(u)+f(v)=f\left(\frac{u+v}{k}\right).$
\end{proposition}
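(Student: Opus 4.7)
The plan is to recast the existence of $v$ as a fixed-point problem on $\mathcal{X}$ and apply the Banach contraction principle. Using the surjectivity of $f$ to form the inverse isometry $f^{-1}\colon\mathcal{Y}\to\mathcal{X}$, I would rewrite the desired identity $f(u)+f(v)=f\!\left(\tfrac{u+v}{k}\right)$ as $v = k\,f^{-1}\!\bigl(f(u)+f(v)\bigr) - u$. This motivates defining
\[
T(v) \;:=\; k\,f^{-1}\!\bigl(f(u)+f(v)\bigr) - u, \qquad v \in \mathcal{X},
\]
and seeking a (unique) fixed point of $T$.

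The key step is a one-line Lipschitz estimate: since scaling by $k$ multiplies norms by $|k|$, and since $f^{-1}$, translation by $f(u)$, and $f$ are all distance-preserving, a direct calculation gives $\|T(v_1)-T(v_2)\| = |k|\cdot\|v_1-v_2\|$. Because $k\in\mathbb{N}$ forces $|k|\le 1$ in a non-Archimedean valuation, the assumption $|k|\ne 1$ yields $|k|<1$, so $T$ is a strict contraction. The only subtlety I expect is choosing the right reformulation: the naive forward iteration $f(u)+f(v_{n+1}) = f(\tfrac{u+v_n}{k})$ expands distances by the factor $1/|k|>1$ and would not close, so one is forced into the inverse form of the recursion above.

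Finally, I would invoke the Banach contraction principle, which is valid in any complete non-Archimedean metric space. Spherical completeness of $\mathcal{X}$ implies completeness; if instead $\mathcal{Y}$ is assumed spherically complete, I would record the transfer observation that the bijective isometry $f$ sends each closed ball $B(x,r)\subseteq\mathcal{X}$ bijectively onto $B(f(x),r)\subseteq\mathcal{Y}$ and preserves inclusions of totally ordered chains, so spherical completeness passes from $\mathcal{Y}$ back to $\mathcal{X}$ via $f^{-1}$. The contraction $T$ then has a unique fixed point $v\in\mathcal{X}$; unwinding $T(v)=v$ gives $u+v = k\,f^{-1}\!\bigl(f(u)+f(v)\bigr)$, which rearranges to $f(u)+f(v) = f\!\left(\tfrac{u+v}{k}\right)$, and the uniqueness of $v$ is inherited from the uniqueness of the fixed point of $T$.
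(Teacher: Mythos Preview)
Your proof is correct and follows essentially the same route as the paper: your map $T$ coincides with the paper's composite $h=\varphi\circ f^{-1}\circ\psi\circ f$, where $\varphi(x)=kx-u$ and $\psi(y)=f(u)+y$, and both arguments reduce the problem to a fixed-point on $\mathcal{X}$ after transferring spherical completeness through the bijective isometry. The only difference is that the paper invokes the Petalas--Vidalis fixed-point theorem for spherically complete spaces, while you apply the ordinary Banach contraction principle, which already suffices here since $T$ has the uniform contraction ratio $|k|<1$ and spherical completeness implies completeness.
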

\begin{proof}
We first show that both spaces are spherically complete. Let
$\mathcal{X}$ be a spherically complete and $\mathcal{B}$ is a
collection of closed balls in $\mathcal{Y}$ which is totally
ordered by inclusion. Since $f$ is a surjective isometry,
$f^{-1}(\mathcal{B})$ is a collection of closed balls in
$\mathcal{X}$ which is totally ordered by inclusion. Thus $\cap
f^{-1}(\mathcal{B}) \neq \emptyset $. Therefore we have $\cap
\mathcal{B}\neq \emptyset.$ Similarly one can prove that if
$\mathcal{Y}$ is spherically complete then so is $\mathcal{X}$.

Let $u \in \mathcal{X}$. The mapping $\varphi:\mathcal{X} \to
\mathcal{X}$ defined by $\varphi(x):=kx-u$ is a contractive mapping
since
\begin{eqnarray*}
\|\varphi(x)-\varphi(y)\|=\|kx-ky\|=|k| \|x-y\|<\|x-y\|.
\end{eqnarray*}
Define the isometry mapping $\psi:\mathcal{Y}\to \mathcal{Y}$ by
$\psi(y):=f(u)+y.$

\noindent The mapping $h:=\varphi f^{-1}\psi f$ is a contractive
mapping on $\mathcal{X}$, since
\begin{eqnarray}\label{eq0}
\|h(x)-h(y)\|&=&\|\left(\varphi f^{-1}\psi f\right)(x)-\left(\varphi
f^{-1}\psi f\right)(y)\|\nonumber\\&<& \|(f^{-1}\psi
f)(x)-(f^{-1}\psi
f)(y)\|\nonumber\\
&=&\|(\psi f)(x)-(\psi f)(y)\|\nonumber\\
&=& \|f(x)-f(y)\|\nonumber\\
&=&\|x-y\|.
\end{eqnarray}
By \cite[Theorem 1]{P-V} $h$ has a unique fixed point $v$. Then we
have
\begin{eqnarray*}
\left(\varphi f^{-1}\psi
f\right)(v)=h(v)=v=\varphi\left(\frac{u+v}{k}\right)
\end{eqnarray*}
Therefore $\psi(f(v))=f\left(\frac{u+v}{k}\right)$, since $\varphi$
is one to one. Hence $f(u)+f(v)=f\left(\frac{u+v}{k}\right)$.
\end{proof}

Finally, suppose that $E$ and $F$ are metric spaces and $f : E\to F$
is a mapping. A real number $r>0$ is called a conservative distance
for $f$ if $d(x,y)=r$ implies $d(f(x),f(y))=r$. In 1970, A. D.
Aleksandrov \cite{ALE} posed the following problem: "Under what
conditions is a mapping preserving a distance $r$ an isometry?" This
problem is not easy to solve even in the case where $E$ and $F$ are
normed spaces. A number of mathematicians have discussed Aleksandrov
problem under certain additional conditions, see \cite{R-S, R-X}. In
the spirit of the Mazur--Ulam theorem, we pose the following
problem:
\begin{problem}[\textbf{Aleksandrov Problem in non-Archimedean spaces}]
Let $\mathcal{X}$ and $\mathcal{Y}$ be non-Archimedean normed
spaces. Under what conditions is a mapping preserving a distance $r$
an isometry?
\end{problem}
Another related subject is study of the stability of isometries (see
\cite{SEM}) in the framework of non-Archimedean normed spaces as
follows.
\begin{problem}[\textbf{Stability of isometries in non-Archimedean spaces}]
Let $\mathcal{X}$ and $\mathcal{Y}$ be non-Archimedean normed spaces
and $f: {\mathcal X} \to {\mathcal Y}$ be a mapping satisfying
$$|\,\|f(x)-f(y)\| - \|x-y\|\,| \leq \varepsilon$$
for some $\varepsilon$ and for all $x, y \in {\mathcal X}$. Under
what assumptions are there a constant $\kappa$ and an isometry $T:
{\mathcal X} \to {\mathcal Y}$ such that $\|f(x)-T(x)\| \leq \kappa
\varepsilon$ for all $x \in {\mathcal X}$?
\end{problem}

\bibliographystyle{amsplain}

\end{document}